\documentclass[11pt,english]{article}

\usepackage{amsmath,amssymb,amsthm,mathtools}
\usepackage{enumerate}

\usepackage{hyperref}
\hypersetup{colorlinks=true,linkcolor=blue,citecolor=blue,pdfpagemode=UseNone,pdfstartview={XYZ null null 1.00}}


\usepackage{amsmath, amsthm, amsopn, amssymb}
\usepackage{xcolor}
\usepackage{graphicx}

\usepackage[margin = 20mm, bottom=20mm,footskip=7mm,top=20mm]{geometry}
\setlength{\parskip}{\medskipamount}
\setlength{\parindent}{0pt}

\addtolength{\intextsep}{6pt} 
\addtolength{\abovecaptionskip}{10pt}
\addtolength{\belowcaptionskip}{-5pt}

\theoremstyle{plain}
\newtheorem*{theorem*}{Theorem}
\newtheorem{theorem}{Theorem}[section]
\newtheorem{lemma}[theorem]{Lemma}
\newtheorem{claim}[theorem]{Claim}
\newtheorem{proposition}[theorem]{Proposition}
\newtheorem*{claim*}{Claim}

\theoremstyle{remark}

\newcommand{\A}{\mathcal{A}}

\newcommand{\F}{\mathcal{F}}
\newcommand{\G}{\mathcal{G}}

\let\originalleft\left
\let\originalright\right
\renewcommand{\left}{\mathopen{}\mathclose\bgroup\originalleft}
\renewcommand{\right}{\aftergroup\egroup\originalright}
\date{}

\title{\vspace{-0.8cm}Infinite Sperner's theorem}

\author{
Benny Sudakov\thanks{ETH Zurich, \emph{e-mail}: \textbf{\{benjamin.sudakov,istvan.tomon,zsolt.wagner\}@math.ethz.ch}. Research was supported by SNSF grant 200021-175573.}
\and
Istv\'an Tomon\footnotemark[1] \thanks{MIPT Moscow, Research partially supported by  the Ministry of Educational and Science of the Russian Federation in the framework of MegaGrant no 075-15-2019-1926.}
\and
Adam Zsolt Wagner\footnotemark[1]
}

\begin{document}
\maketitle

\begin{abstract}
One of the most classical results in extremal set theory is Sperner's theorem, which says that the largest antichain in the Boolean lattice $2^{[n]}$ has size $\Theta\big(\frac{2^n}{\sqrt{n}}\big)$. Motivated by an 
old problem of Erd\H{o}s on the growth of infinite Sidon sequences, in this note we study the growth rate of  maximum infinite antichains. Using the well known Kraft's inequality for prefix codes, it is not difficult to show that infinite antichains should be ``thinner'' than the corresponding finite ones. More precisely, if $\F\subset 2^{\mathbb{N}}$ is an antichain, then
   $$\liminf_{n\rightarrow \infty}\big|\F \cap 2^{[n]}\big|\left(\frac{2^n}{n\log n}\right)^{-1}=0.$$ 
Our main result shows that this bound is essentially tight, that is, we construct an antichain $\F$ such that $$\liminf_{n\rightarrow \infty}\big|\F \cap 2^{[n]}\big|\left(\frac{2^n}{n\log^{C} n}\right)^{-1}>0$$ holds for some absolute constant $C>0$.
\end{abstract}

\section{Introduction}
A typical question in extremal combinatorics asks to determine or estimate the maximum or minimum possible size of a combinatorial structure which satisfies certain requirements. 
Once such a result is established one can also ask about the growth rate of the infinite structure that satisfies the same requirements and compare its behavior with the one in the finite setting. Probably the first problem of this kind was posed by Erd\H{o}s in the 1940s. A \emph{Sidon set} is a set of natural numbers not containing any non-trivial solutions to the equation $a+b=c+d$. Denote by
$[n]$ the set $\{1, \dots, n\}$. It is known (see~\cite{sidon}) that for all large $n$ there exist Sidon sets $S \subset [n]$ of size at least $|S|\geq (1-o(1))\sqrt{n}$. On the other hand, it was already observed in 1941 by Erd\H{o}s and Tur\'an~\cite{erdosturan} that no infinite Sidon set $S \subset \mathbb{N}$  can achieve $\big|S \cap[n]\big| = \Theta(\sqrt{n})$ simultaneously for all $n$. This was further refined by Erd\H{o}s who showed~(see \cite{erdosupper}, Chapter II, §3) that if $S$ is a Sidon set, then $$\liminf_{n\rightarrow \infty}\frac{\big|S\cap [n]\big| }{\sqrt{n/\log n}}\leq 1 \,.$$ 
In particular, this shows that infinite Sidon sequences are thinner than the densest finite ones. The proof of this result appeared in a 1953 letter from Erd\H{o}s to St\"ohr. This letter is cited in~\cite{stohr}, which studies a number of other problems in additive combinatorics in the same spirit of ``finite versus infinite behavior''. A remarkable construction of Ruzsa~\cite{ruzsa} shows that there are Sidon sets $S$ with $\big|S\cap [n]\big|\geq n^{\sqrt{2}-1+o(1)}$ for all $n$. Closing this gap is a fascinating open problem. For further generalizations of this problem, we refer the reader to~\cite{rodl1,rodl2}.
 
It is only natural to study this phenomenon of ``finite versus infinite'' for other extremal problems as well. It is well known that Sidon sets and $C_4$-free graphs are intimately related. Hence, the corresponding question in the graph theoretic setting is that given an infinite $C_4$-free graph $G$ on vertex set $\mathbb{N}$, how large can the minimum degree $\delta_n$ of $G_n$ be, where $G_n$ is the restriction of $G$ to the set $[n]$? Conlon--Fox--Sudakov~\cite{benny!} proved the graph-theoretic analogue of Erd\H{o}s' result, that is, $$\liminf_{n\rightarrow\infty}\frac{\delta_n}{\sqrt{n/\log n}}<\infty.$$
They also extended this result to the more general setting of $K_{s,t}$-free graphs.

In this short note we consider a similar problem for maximal antichains. A family $\F\subset 2^{\mathbb{N}}$ is an \emph{antichain} if for all distinct $A,B\in \F$ we have $A\not\subset B$. One of the most classical results of extremal combinatorics is Sperner's theorem \cite{sperner}, which states that if $\F \subset 2^{[n]}$ is an antichain then $$|\F|\leq \binom{n}{\lfloor n/2 \rfloor} = \Theta \left(\frac{2^n}{\sqrt{n}}\right).$$ 
Following Erd\H{o}s, one can naturally ask what happens with Sperner's problem in an infinite setting? Using the well known Kraft's inequality \cite{kraft} from 1949, it is not difficult to show that there is a polynomial drop in density in this case.
\begin{theorem}\label{thm:upper} 
Let $\F\subset 2^{\mathbb{N}}$ be an antichain. Then  $$\liminf_{n\rightarrow \infty}\big|\F \cap 2^{[n]}\big| \, \left(\frac{2^n}{ n\log n}\right)^{-1}=0.$$
\end{theorem}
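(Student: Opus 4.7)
My plan is to apply Kraft's inequality to a prefix code cleverly extracted from $\F$, as the hint suggests. The first step would be a reduction: infinite members of $\F$ never belong to any $2^{[n]}$, so I may restrict attention to the subfamily of finite members of $\F$. The case $\emptyset \in \F$ is degenerate (it forces $\F = \{\emptyset\}$ and trivializes the claim), so I may assume every $A \in \F$ is a finite nonempty subset of $\N$.

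Next, for each $A \in \F$ I would set $M(A) := \max A$ and define the codeword $c(A) \in \{0,1\}^{M(A)}$ to be the characteristic vector of $A$ restricted to $[M(A)]$. I claim the family $\{c(A) : A \in \F\}$ is a prefix code. Indeed, if $c(A)$ were a prefix of $c(B)$ for distinct $A, B \in \F$, then $M(A) \leq M(B)$ and $A \cap [M(A)] = B \cap [M(A)]$; since $A \subseteq [M(A)]$, this would give $A \subseteq B$, contradicting the antichain assumption. Kraft's inequality then gives
$$\sum_{A \in \F} 2^{-M(A)} \;\leq\; 1.$$

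To finish, let $f(n) = |\F \cap 2^{[n]}|$ and observe that the number of $A \in \F$ with $M(A) = m$ equals $f(m) - f(m-1)$. A brief summation by parts should convert the bound above into $\sum_{m \geq 1} f(m)/2^{m} \leq 2$. If the conclusion of the theorem failed, that is, if $f(m) \geq c \cdot 2^m/(m \log m)$ for some $c > 0$ and all sufficiently large $m$, the left-hand side would be bounded below by $c \sum 1/(m \log m)$, which diverges --- contradiction. The crux of the argument is the choice of codeword: truncating $\chi_A$ at $\max A$ is essential, because using the full length-$n$ characteristic vector would only reproduce Sperner's bound. Everything after that is essentially bookkeeping, and the borderline divergence of $\sum 1/(m \log m)$ is exactly what pins down the $n \log n$ factor in the theorem.
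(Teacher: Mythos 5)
Your proposal is correct and follows essentially the same route as the paper: encode each $A$ by its characteristic vector truncated at $\max A$, observe the antichain property makes this a prefix code, invoke Kraft's inequality, convert via Abel summation to $\sum_m |\F\cap 2^{[m]}|/2^m \leq 2$, and conclude from the divergence of $\sum 1/(m\log m)$. The only cosmetic differences are that you apply Kraft directly to the infinite code and explicitly dispose of the $\emptyset$ case, whereas the paper works with truncations to $2^{[N]}$ and then lets $N\to\infty$.
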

While in the case of Sidon sets there is a wide gap between the lower and upper bound, perhaps surprisingly, in our problem we can provide bounds that match up to a polylogarithmic term.
\begin{theorem}\label{thm:constr}
There exists an antichain $\F\subset 2^{\mathbb{N}}$ such that $$\liminf_{n\rightarrow \infty}\big|\F \cap 2^{[n]}\big|\, \left( \frac{2^n}{n\log^{46} n} \right)^{-1}>0.$$ 
\end{theorem}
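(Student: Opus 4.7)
My plan is to construct $\F$ layer by layer, indexed by the maximum element. For each $n$, I fix a "prefix length" $m_n$ growing like $\log_2 n$ up to $\log\log$ factors, and a "signature" $P_n \subseteq [m_n]$, and then set
\[
\F_n \;=\; \{A \subseteq [n] : A \cap [m_n] = P_n,\ n \in A\}, \qquad \F \;=\; \bigcup_{n \geq 1} \F_n.
\]
Every $A \in \F_n$ has $\max A = n$ and is determined by its free bits in $(m_n,n-1]$, so $|\F_n| = 2^{n-m_n-1}$. The density follows immediately: since the $\F_n$ are pairwise disjoint and $\F_n \subseteq 2^{[n]}$, we have $|\F \cap 2^{[n]}| = \sum_{j\leq n} 2^{j-m_j-1}$, a sum dominated by its last term, so $|\F \cap 2^{[n]}| = \Theta(2^n/2^{m_n})$. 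Choosing $m_n = \lfloor \log_2 n + C\log_2\log_2 n\rfloor$ yields $|\F \cap 2^{[n]}| = \Theta(2^n/(n\log^{C}n))$, which matches the theorem for large enough $C$.

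The next step is to reduce the antichain property to a clean statement about the signatures. For $A \in \F_n$ and $B \in \F_{n'}$ with $n<n'$: the reverse inclusion $B \not\subseteq A$ is automatic because $n' \in B$ and $n' \notin A$. For $A \not\subseteq B$, it suffices to find some $i \in P_n \setminus P_{n'}$: such $i$ lies in $[m_n] \subseteq [m_{n'}]$, and since $B \cap [m_{n'}] = P_{n'}$ we have $i \in A \setminus B$. Thus it is enough to exhibit an infinite antichain $\{P_n\}_{n \geq 1}$ in $2^{\mathbb{N}}$ with $P_n \subseteq [m_n]$ for every $n$.

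The third step is to build this signature antichain. Sperner's theorem inside $[m_n]$ shows that $n$ pairwise-incomparable subsets of $[m_n]$ exist as soon as $\binom{m_n}{\lfloor m_n/2\rfloor} \geq n$, which happens for $m_n = \log_2 n + O(\log_2\log_2 n)$, so the desired signatures are information-theoretically available. The challenge is to select them compatibly \emph{across} the varying universes $[m_n]$: a naive greedy or probabilistic construction (pick $P_n$ uniformly from the middle layer of $[m_n]$) fails, because the probability that a random new signature contains some fixed earlier $P_i$ of size $\approx m_i/2$ is $\approx 2^{-m_i/2} \approx i^{-1/2}/\log^{C/2}i$, and the corresponding tail sum $\sum_{i<n} i^{-1/2}/\log^{C/2}i$ diverges. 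To repair this I would bias the sizes of the signatures away from the middle by a controlled $\Theta(\log\log n)$ shift and combine this with a deletion/alteration step on the remaining rare "bad" candidates.

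The main obstacle is precisely this signature construction: pushing $m_n$ as close to $\log_2 n$ as possible while keeping the $P_n$'s pairwise incomparable across the different universe sizes. Every reduction of $m_n$ by an additive $\log_2\log_2 n$ factor buys one more factor of $\log n$ in the final density, but tightens the union bound that one must survive to choose the next $P_n$. Tracking the accumulated polylog losses — the deviation from the middle layer needed for summability, the slack in the union bound over previous signatures, the cost of the alteration step, and the geometric overhead from telescoping $|\F_j|$ into $|\F \cap 2^{[n]}|$ — is what forces the explicit exponent $C=46$.
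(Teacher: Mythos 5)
Your construction has a fatal flaw that you never check: within a single layer $\F_n$, the sets are \emph{not} pairwise incomparable. Two sets $A,B\in\F_n$ share the prefix $P_n$ on $[m_n]$ and both contain $n$, but are otherwise free on the interval $(m_n,n)$, so one can strictly contain the other --- for instance $A=P_n\cup\{n\}$ and $B=P_n\cup\{m_n+1,n\}$ are both in $\F_n$ with $A\subsetneq B$. You only verify incomparability for $A\in\F_n$, $B\in\F_{n'}$ with $n<n'$, and silently skip the case $n=n'$. To repair this you would have to make each $\F_n$ an antichain in its own right, and Sperner's theorem applied to the $n-m_n-1$ free coordinates then caps $|\F_n|$ at roughly $2^{n-m_n}/\sqrt{n}$, which drops the overall density to $\Theta\big(2^n/(n^{3/2}(\log n)^{O(1)})\big)$. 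That is exactly the warm-up bound the paper dismisses before its real construction, so the approach cannot reach $2^n/(n\log^{46}n)$. (Incidentally, once you accept the $n^{3/2}$ loss, the ``hard'' signature problem you worry about becomes easy --- you could take the $P_n$ to be the Catalan-path antichain itself, enumerated by maximum element, which gives $m_j=\log_2 j + O(\log\log j)$ --- but this does not salvage the exponent.)

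The paper avoids this obstacle by a genuinely different route: each layer $\F_n$ consists of all $(n+f(n))$-element subsets of $[2n]$ whose partial intersection counts $|A\cap[2i]|$ stay strictly below $i+f(i)$, with $f(i)=\Theta(\sqrt{i\log\log i})$. Because every $\F_n$ is uniform, it is trivially an antichain, and cross-layer incomparability follows from the monotonicity of $f$. The size of $\F_n$ is then estimated via a version of the law of the iterated logarithm (to show a constant fraction of paths stay under the curve on the first half) together with a cycle-lemma count of lattice paths staying above a line on the second half. The uniform-size trick is precisely what sidesteps the Sperner penalty that your prefix-coding construction incurs, and the analytic work then goes into choosing the curve $f$ as flat as possible while still containing almost all random walks.
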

We give the proof of the upper bound, Theorem~\ref{thm:upper}, in Section~\ref{sec:upper}. The construction that shows the lower bound in Theorem~\ref{thm:constr} is presented in Section~\ref{sec:constr}. Some open questions and future research directions are discussed in Section~\ref{sec:concl}.

\section{Infinite antichains cannot be too large}\label{sec:upper}
Consider a family $\mathcal{S}$ of $\{0,1\}$ sequences of finite length. Say that $\mathcal{S}$ is a \emph{prefix code} if no element of $\mathcal{S}$ is a prefix of another. Denoting by $|s|$ the length of a $\{0,1\}$ sequence, the well known Krafts's inequality \cite{kraft} tells us that if $\mathcal{S}$ is a prefix code, then
$$\sum_{s\in\mathcal{S}}2^{-|s|}\leq 1.$$

We show that we can use this inequality to prove the following proposition, which then implies Theorem~\ref{thm:upper}.

\begin{proposition}\label{thm:sum}
Let $\F\subset 2^{\mathbb{N}}$ be an antichain. Then
$$\sum_{n=1}^{\infty}\frac{\big|\F\cap 2^{[n]}\big|}{2^{n}}\leq 2.$$
\end{proposition}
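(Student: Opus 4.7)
The plan is to convert each finite set of $\F$ into a binary string, verify these strings form a prefix code, and then apply Kraft's inequality. First I would swap the order of summation. A finite nonempty $A\in\F$ with $m:=\max A$ belongs to $2^{[n]}$ precisely when $n\geq m$, while infinite sets in $\F$ (if any) lie in no $2^{[n]}$. Excluding the trivial case $\F=\{\emptyset\}$ (for which the sum equals $1$), I obtain
$$\sum_{n=1}^{\infty}\frac{|\F\cap 2^{[n]}|}{2^{n}}=\sum_{\substack{A\in\F\\A\text{ finite, nonempty}}}\sum_{n\geq \max A}\frac{1}{2^{n}}=2\sum_{\substack{A\in\F\\A\text{ finite, nonempty}}}2^{-\max A}.$$
So it suffices to prove $\sum_{A} 2^{-\max A}\leq 1$.

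Next I would define, for each such $A$, its characteristic sequence $s_A\in\{0,1\}^{\max A}$ whose $i$-th coordinate is $\mathbf{1}_{i\in A}$, so that $|s_A|=\max A$. The key claim is that $\{s_A : A\in\F\text{ finite, nonempty}\}$ is a prefix code. Indeed, if $s_A$ is a prefix of $s_B$ for distinct $A,B\in\F$, then $\max A\leq \max B$ and the agreement of the first $\max A$ coordinates forces $B\cap[\max A]=A$; since $A\subseteq[\max A]$, this yields $A\subsetneq B$, contradicting the antichain property. Kraft's inequality then gives $\sum_{A} 2^{-\max A}=\sum_{A} 2^{-|s_A|}\leq 1$, which combined with the display above produces the bound of $2$.

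There is no serious obstacle in this argument; the only point requiring care is designing the encoding so that the prefix relation on strings corresponds exactly to the subset relation on sets, together with routine bookkeeping for the empty and infinite sets. The factor of $2$ in the proposition comes precisely from the tail sum $\sum_{n\geq \max A}2^{-n}=2\cdot 2^{-\max A}$, showing the constant in the statement is natural for this approach.
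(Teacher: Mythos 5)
Your proof is correct and takes essentially the same approach as the paper: both encode each finite $A\in\F$ as the characteristic string of length $\max A$, observe that the antichain property makes these strings a prefix code, and invoke Kraft's inequality to bound $\sum_A 2^{-\max A}\leq 1$. The only difference is the bookkeeping — you swap the order of summation (Tonelli, legitimate since all terms are nonnegative) to pull out the factor of $2$ directly, whereas the paper truncates at $N$ and obtains the same bound by telescoping $|\A_n|=|\F\cap 2^{[n]}|-|\F\cap 2^{[n-1]}|$; this is a cosmetic distinction, not a different argument.
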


\begin{proof}
It is enough to show that for every positive integer $N$, we have 
$$\sum_{n=1}^{N}\frac{\big|\F\cap 2^{[n]}\big|}{2^{n}}\leq 2.$$
For a set $F \in \F$, let $\max(F)$ denote the maximum element of $F$, and for $1\leq n\leq N$, let $\A_n=\{F\in\F:\max(F)=n\}$. We can identify each set $A\in\mathcal{A}_{n}$ with a $\{0,1\}$ sequence $s_{A}$ of length $n$ such that the $i$-th element of $s_{A}$ is $0$ if $i\not\in A$, and $1$ otherwise. Then the fact that $\mathcal{F}$ is an antichain implies that the family $\{s_{A}:A\in \mathcal{F}\cap 2^{[N]}\}$ is a prefix code. Therefore, by the above mentioned Kraft's inequality, we get
$$\sum_{n=1}^{N}\frac{|\A_{n}|}{2^{n}}\leq 1.$$
Note that $|\A_n|=\big|\F\cap 2^{[n]}|-|\F\cap  2^{[n-1]}\big|$, so 
 $$\sum_{n=1}^{N}\frac{\big|\F\cap 2^{[n]}|-|\F\cap 2^{[n-1]}\big|}{2^{n}}\leq 1.$$ 
This can be rewritten as
$$\frac{\big|\mathcal{F}\cap 2^{[N]}\big|}{2^{N}}+\sum_{n=1}^{N-1}\frac{\big|\F\cap 2^{[n]}\big|}{2^{n+1}}\leq 1,$$
which implies the desired inequality.
\end{proof}

\begin{proof}[Proof of Theorem~\ref{thm:upper}]
For $n=1,2,\dots$, let $f_n=\big|\F\cap 2^{[n]}\big|\,\left(\frac{2^n}{n\log n}\right)^{-1}$. Then by Proposition~\ref{thm:sum}, we have
$$\sum_{n=1}^{\infty}\frac{f_n}{n\log n}\leq 2.$$
As the sum $\sum_{n=1}^{\infty}\frac{1}{n\log n}$ diverges, we conclude that $\liminf_{n\rightarrow\infty}f_n=0$.
\end{proof}

\section{Constructing dense infinite antichains}\label{sec:constr}

There are many easy constructions that match the upper bound of Kraft's inequality. Note that we cannot use these for our problem, because being an antichain is a much stronger requirement than being prefix-free, e.g.~the family $\{\{1,2,3\},\{1,3\}\}$ is prefix-free but not an antichain.

Before we give the construction that achieves the bound in Theorem~\ref{thm:constr}, it will be helpful to first look at a slightly worse construction. While this only achieves a density of $2^n/n^{3/2}$, it will illustrate some of the ideas of the proof of Theorem~\ref{thm:constr}. Consider the family
\begin{equation}\label{eq:constr1}
    \F_n=\{A\subset [2n]: |A|=n, ~ \forall i: 0<i<n\implies  |A\cap [2i]|<i \},
\end{equation}
and let $\F=\bigcup_n \F_n$. Observe that $\F$ is an antichain, and that there is a bijection between elements of $\F_n$ and lattice paths from $(0,0)$ to $(n,n)$ where every step is to the right or up, and the path is fully under the line $y=x$ except at the endpoints. The enumeration of such paths is given by the Catalan numbers, and hence $$|\F_n|=\frac{1}{n}\binom{2(n-1)}{n-1}=\Theta\left(\frac{2^{2n}}{n^{3/2}}\right).$$ One can then verify that $$|\F\cap 2^{[n]}|=\Theta\left(\frac{2^{n}}{n^{3/2}}\right).$$

We will generalize the previous construction as follows. Fix an arbitrary monotone increasing function $f:\mathbb{N}\rightarrow \mathbb{N}$, and define the family 
$$\F_n=\{A\subset [2n]: |A|=n+f(n),\forall i:0<i<n\implies |A\cap [2i]|<i+f(i)\}.$$
Again one can show that $\F=\bigcup_{n=1}^{\infty}\F_n$ is an antichain. Indeed, suppose to the contrary that there is $A\in \F_n$ and $B\in \F_m$ for some $n,m$ such that $A\subset B$. As each $\F_n$ is a uniform family, we must have $n\neq m$. Moreover as $f$ is a monotone increasing function, we must have $n<m$. Then $A$ contains $n+f(n)$ elements from $[2n]$, and since $A\subset B$, $B$ must contain at least $n+f(n)$ elements from $[2n]$ as well. This however contradicts the definition of $\F_m$. 

We show that if we pick $f(n)= \Theta(\sqrt{n\log\log n})$, then $\F_n$ has the desired size. The main difficulty of our proof comes from estimating the size of $\F_n$.

\begin{proof}[Proof of Theorem~\ref{thm:constr}]
Let $f(x):=\lfloor 3\sqrt{x\log\log (x+3)} \rfloor + 100$ and
\begin{equation}
        \F_n=\{A\subset [2n]: |A|=n+f(n), ~ \forall i: 0<i<n\implies  |A\cap [2i]|<i+f(i) \}.
\end{equation}
Set $\F:=\bigcup_{n=1}^{\infty} \F_n$. Then $\F$ is an antichain, so it only remains to show that $$\liminf_{n\rightarrow \infty}|\F_n|\cdot\frac{n\log^{46} n}{2^n}>0.$$

 It will be helpful for us to identify sets with lattice paths that start at $(0,0)$ and take steps in directions $(1,1)$ and $(1,-1)$. By identifying the terms ``contains element $i$'' and ``the $i$-th step is in direction $(1,1)$'', we find a bijection between $\F_n$ and the family $\G_n$, defined as the collection of paths starting at $(0,0)$, ending at $(2n,2f(n))$, taking steps in directions $(1,1)$ and $(1,-1)$, and staying strictly below the curve $y=2f(x/2)$ for all even $x$ satisfying $0<x<2n$. See Figure \ref{fig:my_label} for an illustration.
 
 \begin{figure}
     \centering
    \includegraphics[scale=0.75]{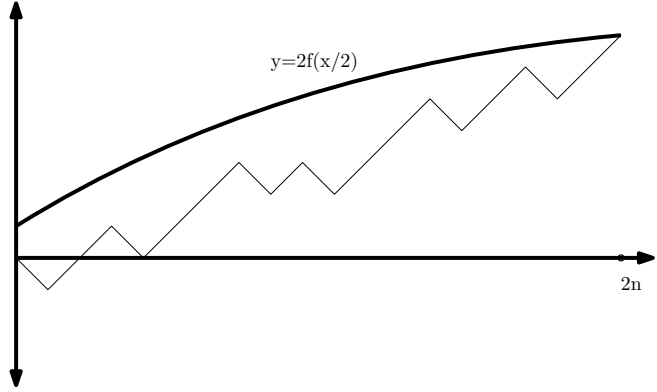}
     \caption{An element of the family $\G_n$}
     \label{fig:my_label}
 \end{figure}
 
 The following statement is a version of the Law of the Iterated Logarithm (see e.g.~\cite{probbook}). For the sake of the reader we prove here a specific form which we need for our construction.
\begin{lemma}\label{claim:lil}
 Let $X_1,X_2,\ldots$ be independent random variables with $\mathbb{P}(X_i=1)=\mathbb{P}(X_i=-1)=1/2$ for each $i$. For all $n$, let $S_n:= X_1+\ldots +X_n$. Then the probability that there exists some $n$ with $S_n>3\sqrt{n\log\log (n+3)}+100$ is at most $1/2$.
 \end{lemma}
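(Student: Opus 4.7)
The plan is to combine a dyadic decomposition of $\mathbb{N}$ with a standard maximal Chernoff bound for Rademacher sums. Write $g(n):=3\sqrt{n\log\log(n+3)}+100$, which is monotone increasing, and partition the positive integers into the blocks $I_k:=\{n:2^{k-1}\le n<2^k\}$ for $k\ge 1$. The goal is to bound the desired probability by $\sum_{k\ge 1}\mathbb{P}(\exists n\in I_k:S_n>g(n))$ via a union bound, and then show that the sum is at most $1/2$.

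For small $k$ a purely deterministic observation suffices: if $k\le 6$ then every $n\in I_k$ satisfies $n\le 2^k-1\le 63<100\le g(n)$, and since $|S_n|\le n$ the bad event is impossible. For $k\ge 7$, monotonicity of $g$ gives
$$\mathbb{P}\bigl(\exists n\in I_k:S_n>g(n)\bigr)\le \mathbb{P}\Bigl(\max_{n\le 2^k}S_n>g(2^{k-1})\Bigr).$$
Applying Doob's inequality to the nonnegative submartingale $e^{\lambda S_n}$ together with the Hoeffding estimate $\mathbb{E}[e^{\lambda S_N}]=(\cosh\lambda)^N\le e^{\lambda^2 N/2}$, and optimizing over $\lambda>0$, gives the maximal Chernoff bound
$$\mathbb{P}\Bigl(\max_{n\le N}S_n>t\Bigr)\le \exp\!\Bigl(-\tfrac{t^2}{2N}\Bigr).$$
Plugging in $N=2^k$, $t=g(2^{k-1})$, and using $(a+b)^2\ge a^2+b^2$ to bound $g(2^{k-1})^2\ge 9\cdot 2^{k-1}\log\log(2^{k-1}+3)+10{,}000$, one obtains
$$\mathbb{P}\bigl(\exists n\in I_k:S_n>g(n)\bigr)\le \frac{1}{\bigl(\log(2^{k-1}+3)\bigr)^{9/4}}\cdot\exp\!\Bigl(-\tfrac{5000}{2^k}\Bigr).$$

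What remains is the numerical check $\sum_{k\ge 7}\text{(above)}\le 1/2$, and this bookkeeping is the only real obstacle. The first factor behaves like $C/k^{9/4}$ and is therefore summable, while the second factor is extremely small near the cutoff $k=7$ (where $e^{-5000/128}$ is vanishingly tiny) and tends to $1$ as $k\to\infty$. The constants $3$ in front of the square root and $100$ as the additive term in $g$ are chosen generously so that the tail sum drops below $1/2$ with room to spare; the verification involves no ideas beyond summing a tame series, which is why I treat it as essentially routine rather than the heart of the argument.
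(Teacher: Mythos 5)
Your proof is correct. Let me flag the places where the argument actually needs care and compare with the paper's route.

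The verification I would want to see spelled out is precisely the numerical estimate you defer. Using your bound, the term at scale $k$ is at most $\bigl(\log(2^{k-1}+3)\bigr)^{-9/4}\exp(-5000/2^k)$; for $k\le 12$ the second factor is at most $e^{-1.2}$ and rapidly dies as $k$ decreases, while for $k\ge 13$ the first factor is at most $\bigl((k-1)\log 2\bigr)^{-9/4}\le 2.3\,(k-1)^{-9/4}$, and $\sum_{j\ge 12}j^{-9/4}<0.04$. A short computation then puts the total well below $0.1$, comfortably under $1/2$. So the constants $3$ and $100$ indeed leave room, and your closing sentence is justified, not wishful. The other detail worth noting is that your deterministic cutoff (that $|S_n|\le n<100\le g(n)$ for $k\le 6$) cannot be pushed to $k=7$, since $I_7$ contains $n>100$; you correctly switch to the probabilistic bound at $k=7$, where $e^{-5000/128}$ is so tiny that nothing is lost.

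Structurally, both your proof and the paper's share the same skeleton: a dyadic decomposition, a maximal inequality converting ``$\max_{n\le N}S_n$ is large'' into ``$S_N$ is large'', a Chernoff/Hoeffding tail bound, and a summable series. The difference is in the maximal inequality. You invoke Doob's inequality for the nonnegative submartingale $e^{\lambda S_n}$ and optimize over $\lambda$ to get $\mathbb{P}(\max_{n\le N}S_n>t)\le e^{-t^2/(2N)}$. The paper instead re-derives a weaker maximal bound from scratch via a first-passage decomposition: let $A_j$ be the event that $j$ is the first time the moving threshold is crossed, note the $A_j$ are disjoint, intersect each with the independent event $U_j$ (that the remaining increments are net nonnegative, probability $\ge 1/2$), and observe that $A_j\cap U_j$ forces $S_n$ to still be large at time $n$. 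This is essentially Ottaviani's/L\'evy's reflection trick hand-rolled for this setting; it costs a multiplicative factor of $2$ and a loss of a constant (their $\tfrac{2}{3}$) in the exponent, but it keeps the argument elementary and self-contained. Your route buys a cleaner and sharper maximal inequality at the price of quoting standard martingale theory; the paper's route buys self-containment at the price of a slightly lossier and more ad hoc estimate. Either yields the stated $1/2$ bound.
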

 \begin{proof}
 Let $C>0$ be a positive constant. For all $i$, let $A_i$ be the event that $i$ is the smallest integer with $S_i>C\sqrt{i\log\log (i+3)}+100$, and observe that for $i\leq 100$ the event $A_i$ is empty. Fix an arbitrary  integer $n$, and in the first step of the proof we will show an upper bound for the probability that $A_i$ occurs for some $i$ with $\lceil n/2 \rceil \leq i \leq n$. Let $U_j$ be the event that the number of 1's amongst $X_{j+1},X_{j+2},\ldots,X_n$ is at least $(n-j)/2$, thus clearly $\mathbb{P}(U_j)\geq 1/2$ for all $j$. From definitions, one can easily check that for all $j\geq n/2\geq 100$, we have 
 \begin{equation}\label{eq:lil1}
     A_j\cap U_j \implies \left\{S_n > \frac{2}{3}C\sqrt{n\log\log n}\right\}.
 \end{equation}
Therefore,
 \begin{equation}\label{eq:lil2}
     \mathbb{P}\left(\bigcup_{j=\lceil n/2\rceil}^n A_j\cap U_j\right) \leq  \mathbb{P}\left(S_n > \frac{2}{3}C\sqrt{n\log\log n}\right).
 \end{equation}
 As $A_j$ and $U_j$ are independent, we can write
 \begin{equation} \label{eq:lil3} 
     \mathbb{P}(A_j\cap U_j)=\mathbb{P}(A_j)\mathbb{P}(U_j)\geq \frac{\mathbb{P}(A_j)}{2}. 
 \end{equation}
 Putting equations~(\ref{eq:lil2}) and~(\ref{eq:lil3}) together, and using that the $A_j$-s, and hence the $A_j\cap U_j$-s, are disjoint events, we get
 \begin{equation}
 \begin{split}
     \mathbb{P}\left(\bigcup_{j=\lceil n/2\rceil}^n A_j\right) &= \sum_{j=\lceil n/2\rceil}^n \mathbb{P}(A_j) \leq 2\sum_{j=\lceil n/2\rceil}^n \mathbb{P}(A_j\cap U_j) = \\
     &= 2\cdot \mathbb{P}\left(\bigcup_{j=\lceil n/2\rceil}^n A_j\cap U_j\right) \leq 2\cdot \mathbb{P}\left(S_n > \frac{2}{3}C\sqrt{n\log\log n}\right).
     \end{split}
 \end{equation}
 We can bound the last quantity using the standard Chernoff bound, to get
 $$ \mathbb{P}\left(\bigcup_{j=\lceil n/2\rceil}^n A_j\right)\leq \mathbb{P}\left(S_n > \frac{2}{3}C\sqrt{n\log\log n}\right) \leq e^{-\frac{2}{9}C^2\log\log n}.$$
 Now we are ready to bound the probability that an $A_i$ occurs for some $i\leq N$, where $N$ is an arbitrary large integer of the form $N= 100\cdot 2^k$.
 \begin{equation}
 \begin{split}
     \mathbb{P}\left(\bigcup_{j=100}^N A_j\right)&\leq   \mathbb{P}\left(\bigcup_{j=N/2}^{N} A_j\right) + \mathbb{P}\left(\bigcup_{j=N/4}^{N/2} A_j\right) + \ldots + \mathbb{P}\left(\bigcup_{j=100}^{200} A_j\right)  \\&\leq  e^{-\frac{2}{9}C^2\log\log N} + e^{-\frac{2}{9}C^2\log\log (N/2)} + \ldots + e^{-\frac{2}{9}C^2\log\log 200} \\&=
     \sum_{i=1}^{k} (\log100 + i\log2)^{-\frac{2}{9}C^2}
     \end{split}
 \end{equation}
 Setting $C=3$ then gives $\mathbb{P}\left(\bigcup_{j=100}^N A_j\right)\leq\frac{1}{2}$. As this bound holds uniformly for all $N$, the claim follows.
 \end{proof}
 
 For $k\in\mathbb{Z}$, denote by $P(k)$ the collection of paths starting at $(0,0)$, ending at $(n,k)$, taking steps in directions $(1,1)$ and $(1,-1)$ (such paths exist precisely if $n+k$ is even), and staying strictly below the curve $y=2f(x/2)$ for all even $x$ satisfying $0<x\leq n$.
 Similarly, let $Q(k)$ denote the collection of paths starting at $(n,k)$, ending at $(2n,2f(n))$, taking steps in directions $(1,1)$ and $(1,-1)$, and staying strictly below the curve $y=2f(x/2)$ for all even $x$ satisfying $x< 2n$. Observe that 
 \begin{equation}\label{eq:gbound}
 |\G_n|=\sum_k |P(k)||Q(k)|,
 \end{equation} hence, it is enough to find suitable lower bounds for $|P(k)|$ and $|Q(k)|$. Let $$A:=2\left(\frac{5}{2}\sqrt{\frac{n}{2}\log\log \frac{n+3}{2}} + 90\right).$$
 
 \begin{claim}\label{lemma:Pk}
$$\sum_{k=-A}^{A} |P(k)|\geq 2^{n-1}.$$
 \end{claim}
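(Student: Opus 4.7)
The plan is to convert counting into probability: if $(X_1,\ldots,X_n)\in\{-1,+1\}^n$ is chosen uniformly and $S_m=\sum_{j\le m}X_j$ is the associated simple walk, the obvious bijection between sign sequences and lattice paths gives
\[
\sum_{k=-A}^{A}|P(k)| \;=\; 2^n\,\mathbb{P}\left(E\cap\{|S_n|\le A\}\right),
\]
where $E$ is the event that $S_{2i}<2f(i)$ for every $1\le i\le \lfloor n/2\rfloor$. Thus the claim reduces to showing $\mathbb{P}(E\cap\{|S_n|\le A\})\ge 1/2$.

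First I would apply Lemma~\ref{claim:lil} to bound $\mathbb{P}(E)$ from below. A direct arithmetic check shows that for every $i\ge 1$,
\[
3\sqrt{2i\log\log(2i+3)}+100 \;<\; 2f(i),
\]
using $3\sqrt{2}<6$ together with $\log\log(2i+3)\le 2\log\log(i+3)$ past a small threshold (the extra $+100$ inside $f$ handles the remaining small~$i$). Therefore the LIL event ``$S_m\le 3\sqrt{m\log\log(m+3)}+100$ for every $m\ge 1$'' is contained in $E$, and Lemma~\ref{claim:lil} gives $\mathbb{P}(E)\ge 1/2$.

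Next I would control the endpoint via Hoeffding's inequality:
\[
\mathbb{P}(|S_n|>A) \;\le\; 2\exp\!\left(-\frac{A^2}{2n}\right) \;\le\; 2\bigl(\log\tfrac{n+3}{2}\bigr)^{-25/4},
\]
since $A^2\ge 25\cdot(n/2)\log\log\frac{n+3}{2}$. This probability is trivially $0$ whenever $A\ge n$ (which covers every $n$ in the ``small'' range where Hoeffding would be useless), and is extremely small otherwise. Combining,
\[
\mathbb{P}(E\cap\{|S_n|\le A\})\;\ge\;\mathbb{P}(E)-\mathbb{P}(|S_n|>A)\;\ge\;\tfrac{1}{2},
\]
and multiplying by $2^n$ proves the claim.

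The main obstacle is essentially cosmetic: combining $\mathbb{P}(E^c)\le 1/2$ naively with a small Hoeffding error yields only $\ge 1/2-\varepsilon$. To land exactly at $1/2$ one invokes the slightly sharper estimate $\mathbb{P}(E^c)\le \sum_{i\ge 1}(\log 100+i\log 2)^{-2}\approx 0.3$ that is actually proved inside Lemma~\ref{claim:lil}; the resulting slack of roughly $0.2$ easily absorbs the Hoeffding error uniformly in~$n$. The precise choice of the additive constants $90$ in $A$ and $100$ in $f$ is tuned to secure this slack.
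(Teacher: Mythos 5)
Your proof is correct and follows the same basic strategy as the paper's: translate the path count into a probability for a $\pm 1$ walk, verify the arithmetic inequality $2f(i) > 3\sqrt{2i\log\log(2i+3)}+100$ so that the one‑sided event in Lemma~\ref{claim:lil} forces the walk below the curve and forces $S_n\leq A$, and invoke that lemma. The paper's proof is a two‑line remark that does exactly this.

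Where you go beyond the paper is in explicitly noticing and handling the fact that Lemma~\ref{claim:lil} is \emph{one-sided}: it gives $S_n\leq A$ on an event of probability $\geq 1/2$, but says nothing about $S_n\geq -A$, which is also needed for the endpoint to land in $\{k:-A\leq k\leq A\}$. The paper's proof simply asserts ``hence, by Lemma~\ref{claim:lil}'' and leaves this to the reader. Your resolution — for $n\leq A$ the constraint $|S_n|\leq A$ is vacuous, and otherwise $\mathbb{P}(|S_n|>A)$ is tiny by Hoeffding and is absorbed by the slack in the LIL computation (whose actual bound is $\sum_{i\geq 1}(\log 100+i\log 2)^{-2}\approx 0.3$ rather than exactly $1/2$) — is the right way to close this, and is arguably a detail the paper should have spelled out. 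One tiny stylistic remark: since the LIL event already guarantees $S_n\leq A$, you only need the one‑sided tail $\mathbb{P}(S_n<-A)$, not the two‑sided $\mathbb{P}(|S_n|>A)$; this costs nothing but makes the subtraction cleaner. Overall your argument is correct and slightly more complete than the paper's.
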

 \begin{proof}
 It is a straightforward calculation to verify that  $$2f(n/2)\geq A\geq 3\sqrt{n\log\log (n+3)} + 100.$$
 Hence, by Lemma~\ref{claim:lil},  we have 
$\sum_{k=-A}^{A} |P(k)|\geq 2^{n-1}.$
 \end{proof}

The more difficult part is to derive a lower bound for $|Q(k)|$.
\begin{claim}\label{lemma:Qk}
If $-A\leq k\leq A$, then 
$$|Q(k)|\geq \frac{2^n}{n\log^{46}n}.$$
\end{claim}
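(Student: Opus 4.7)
The plan is to recast $|Q(k)|$ as a random walk probability via time reversal, and then bound it by combining Bertrand's ballot theorem with a Gaussian-tail argument for a walk bridge above a convex barrier.

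Let $X_1,\dots,X_n$ be i.i.d.\ uniform on $\{\pm 1\}$ with partial sums $S_j=\sum_{i\le j}X_i$, and set $V:=2f(n)-k$. Identifying a path in $Q(k)$ with its sequence of $\pm 1$ vertical steps and reversing time via $\hat S_l:=S_n-S_{n-l}$ gives
\[
|Q(k)|=2^n\cdot\mathbb{P}\bigl[\hat S_n=V\text{ and }\hat S_l>g(l)\text{ for all even }l\in[2,n-2]\bigr],
\]
where $g(l):=2f(n)-2f(n-l/2)$ is non-decreasing and (essentially) convex, with $g(0)=0$. The definition of $A$ together with $|k|\le A$ gives $V-g(n)=2f(n/2)-k\ge\sqrt{(n/2)\log\log n}$. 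Because $g\ge 0$ and $|X_i|=1$, the barrier event forces $\hat S_l>0$ for every $l\in[1,n]$. Bertrand's ballot theorem combined with Stirling (using $V^2/n=O(\log\log n)$) therefore yields
\[
\mathbb{P}\bigl[\hat S_n=V,\ \hat S_l>0\ \forall l\bigr]=\frac{V}{n}\cdot\frac{\binom{n}{(n+V)/2}}{2^n}\ \ge\ \frac{c_1\sqrt{\log\log n}}{n\,(\log n)^{C_1}}.
\]

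It remains to lower-bound the conditional probability $q:=\mathbb{P}[\hat S_l>g(l)\ \forall l\mid \hat S_n=V,\ \hat S_l>0\ \forall l]$ by $(\log n)^{-C_2}$. Under the conditioning, $(\hat S_l)$ behaves like a discrete Bessel-type excursion whose mean at time $l$ is at least $Vl/n$. Convexity of $g$ gives $g(l)\le lg(n)/n$, so the safety margin satisfies
\[
M(l):=\tfrac{Vl}{n}-g(l)\ \ge\ \tfrac{l}{n}\bigl(V-g(n)\bigr)\ \ge\ c_2\,l\sqrt{\tfrac{\log\log n}{n}}.
\]
Compared to the bridge standard deviation $\sigma_l:=\sqrt{l(n-l)/n}$, this yields $M(l)/\sigma_l\ge c_2\sqrt{l/(n-l)}\sqrt{\log\log n}$. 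In the bulk range $l\in[n/\log\log n,\,n-n/\log\log n]$ the ratio is at least $c_2\sqrt{\log\log n}$, and a Doob-type maximal inequality applied to the centered bridge on dyadic sub-intervals controls the probability of a barrier crossing in this range by a small absolute constant. In the boundary regime ($l<n/\log\log n$ or $l>n-n/\log\log n$), the curve $g$ jumps by $2$ only when $f$ decreases, which occurs at most $O(\sqrt{\log\log n})$ times; each additional level of $g$ costs only a constant factor by a local ballot-type estimate, so the boundary regime contributes at most a $(\log n)^{C_2}$ loss.

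Multiplying the two factors yields $|Q(k)|\ge c_3\cdot 2^n/(n\log^{C_1+C_2}n)\ge 2^n/(n\log^{46}n)$ once the constants are chosen appropriately. The main obstacle is the conditional estimate for $q$: one must control a positively-conditioned walk bridge above the convex barrier $g$, blending Gaussian tail bounds (effective in the bulk, where the margin exceeds the bridge std by a $\sqrt{\log\log n}$-factor) with ballot-type counting (handling the boundary, where the margin and std are comparable) at the crossover scale $l\sim n/\log\log n$ where the two regimes meet.
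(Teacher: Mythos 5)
Your approach---time-reverse, factor as (ballot probability to stay positive and hit $V$) $\times$ (conditional probability to clear the barrier $g$), and estimate the conditional factor by bridge/Gaussian arguments---is genuinely different from the paper's. The paper first replaces the curved barrier by the chord joining its endpoints (valid because $f$ is concave, so the chord lies below the curve), and then counts paths above a straight line \emph{exactly} via a cycle lemma: each cyclic equivalence class of a $\pm1$-word contains at least $p-\lfloor\tfrac{1+\mu}{1-\mu}q\rfloor$ ``heads,'' giving directly $|Q(k)|\ge \tfrac{1}{4}\sqrt{n\log\log n}\cdot \tfrac1n\binom{n}{(n+V)/2}$. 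Note that your first factor $\tfrac{V}{n}\binom{n}{(n+V)/2}2^{-n}$ is already of the same order as the paper's full answer (since $V=\Theta(\sqrt{n\log\log n})$), so to match $\log^{46}n$ you would need the conditional factor $q$ to be $\Omega(1)$, not merely $(\log n)^{-C_2}$; as written your method proves a weaker statement than the claim. (It is in fact true that $q=\Omega(1)$---the paper's count shows that a constant fraction of the paths above $0$ are also above the line $\mu l\ge g(l)$---but your sketch does not establish this.)

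Beyond that mismatch, the details of the conditional estimate have concrete errors. You assert that on the bulk range $l\in[n/\log\log n,\,n-n/\log\log n]$ the ratio $M(l)/\sigma_l\ge c_2\sqrt{\log\log n}$; but at $l=n/\log\log n$ one has $\sqrt{l/(n-l)}\asymp 1/\sqrt{\log\log n}$, so the ratio is only $\Omega(1)$ there, and the claimed $\sqrt{\log\log n}$ gain, which you need to beat the union bound over $\Theta(\log n)$ dyadic scales, is absent on the left half of the bulk. In the boundary regime you claim $g$ jumps by $2$ only $O(\sqrt{\log\log n})$ times; in fact over $l\le n/\log\log n$ the increment is $g(n/\log\log n)-g(0)=2f(n)-2f(n-n/(2\log\log n))\asymp \sqrt{n/\log\log n}$, so the number of jumps is $\Theta(\sqrt{n/\log\log n})$, polynomially many, and ``constant cost per level'' would be catastrophic. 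Finally, you already observe the key inequality $g(l)\le l\,g(n)/n$ coming from the convexity of $g$ (equivalently concavity of $f$), but you use it only to lower-bound the margin $M(l)$; the paper's proof instead uses exactly this to \emph{replace} the curved barrier by the line $y=\mu l$, at which point the counting becomes exact via the cycle lemma and the probabilistic machinery (Doob maximal inequalities, dyadic partitioning, conditioning on positivity) is avoided entirely.
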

\begin{proof}
Say that a path is \emph{good} if it starts at $(0,0)$, ends at $(n,2f(n)-k)$, and with the exception of the second endpoint, stays strictly below the straight line connecting $(0,2f(n/2)-k)$ and $(n,2f(n)-k)$. This line is given by the formula $y=\mu x + c$ with $\mu = \frac{2f(n)-2f(n/2)}{n}$ and $c = 2f(n/2)-k$. Then the number of good paths is a lower bound for $|Q(k)|$, as every good path shifted by the vector $(n,k)$ gives rise to a path in $Q(k)$.

 Note that the number of good paths is equal to the number of paths that start at $(0,0)$, end at  $(n,2f(n)-k)$, and with the exception of the first endpoints, stay strictly above the line $y=\mu x$. Indeed, given a good path with steps $p_1,\ldots, p_n$ where $p_i\in\{(1,1),(1,-1)\}$ for all $i$, the reverse path $p_n,\ldots,p_1$ is of the second type, and vice versa.  We will count the number of these paths as follows. Take any path from the origin to $(n,2f(n)-k)$ using steps $(1,1)$ and $(1,-1)$. Such a path contains $p=\frac{1}{2}\left(n+2f(n)-k\right)$ steps in the $(1,1)$ direction and $q=\frac{1}{2}\left(n-2f(n)+k\right)$ steps in the $(1,-1)$ direction. This path can be represented by a sequence $x_1,\ldots,x_n$ of +1's and -1's, where $x_i=1$ precisely if the $i$'th step was in the $(1,1)$ direction and $x_i=-1$ otherwise. Say that the sequence $x_1,\ldots,x_n$, where the number of $+1$'s is $p$ and the number of $-1$'s is $q$, is \emph{good} if $\sum_{i=1}^j x_i > \mu j$ for all $1\leq j\leq n$. Then the number of good sequences is equal to the number of good paths.
 
 
Given a sequence $x_1,\ldots,x_n$ of $+1$'s and $-1$'s, arrange them in clockwise order around a circle, looping around only once, so that $x_1$ comes after $x_n$ in the clockwise direction. Say that two $\{-1,+1\}$ sequences are equivalent if we can get one from the other by clockwise rotation, that is, $x_1,\dots,x_n$ is equivalent to $y_1,\dots,y_n$ if there exists $r\in [n]$ such that $x_i=y_{i+r}$ for every $i\in [n]$, where indices are meant modulo $n$.  We show that each equivalence class contains at least $p-\lfloor \frac{1+\mu}{1-\mu} q\rfloor$ good sequences. In order to prove this, we need a variant of the well known ``cycle lemma'' (see e.g.~\cite{cycle1}). Although this lemma appeared in many forms in different papers, the variant which we need is not so common and can be found in~\cite{cyclelemma} (corollary of Theorem 3). 
 
Given a sequence $x_1,\ldots,x_n$ of $+1$'s and $-1$'s in circular order and the number $\mu$, say that an index $r$ is a \emph{head} if $\sum_{i=r}^{r+j-1}x_{i}> \mu j$ for all $j\in [n]$. Equivalently, the number of $+1$'s on every nonempty arc beginning with $x_r$ and proceeding in the positive direction is greater than $\frac{1+\mu}{1-\mu}$ times the number of $-1$'s on the same arc. Clearly, if there are $p$ $+1$'s and $q$ $-1$'s in the sequence, then each head is the starting point of a good sequence in the circular order. Here we give a different proof from that in~\cite{cyclelemma} which has an additional benefit of explicitly pointing to the locations of heads.

\begin{lemma}\label{lem:cycle}
Let $x_1,x_2,\ldots,x_{p+q}$ be a sequence with values in $\{-1,+1\}$ such that the number of $+1$'s is $p$ and the number of $-1$'s is $q\neq 0$. Arrange this sequence around a circle as above, and let $\mu\in\mathbb{R}$ satisfy $0<\mu \leq \frac{p-q}{p+q}$. Then this sequence has at least $p-\lfloor \frac{1+\mu}{1-\mu} q\rfloor$ distinct heads. 
\end{lemma} 
\begin{proof}
For all $i$, let $S_i=(x_1+\ldots + x_i) - \mu i$. Let $t:=\min_{0\leq i \leq n}S_i$, and note that since we set $S_0=0$ we have $t\leq 0$. Observe that $S_n = p-q-\mu (p+q)=(1-\mu)(p- \frac{1+\mu}{1-\mu} q)$. For $1\leq i \leq p-\lfloor  \frac{1+\mu}{1-\mu} q \rfloor$, let $\gamma_i$ be the largest index $\gamma$ such that $S_\gamma < t + i(1-\mu)=:d_i$. Note that for $i$ in this range, $d_i\leq S_n$ ; moreover the $\gamma_i$-s are well-defined and distinct because $S_{j+1}-S_j\leq 1-\mu$ for all $j$. We show that $\gamma_i+1$ is a head for $1\leq i \leq p-\lfloor  \frac{1+\mu}{1-\mu} q \rfloor$.

Fix an $i$ with $0\leq i \leq p-\lfloor \frac{1+\mu}{1-\mu} q \rfloor$ and for $j\in [n]$, let $T_j = (x_{\gamma_i+1} + x_{\gamma_i+1}+\ldots + x_{\gamma_i+j})-\mu j$, where the subscripts are interpreted modulo $n$. We claim that $T_j$ is strictly positive. If $j$ is such that $\gamma_i+j\leq n$ then $T_j>0$ because $\gamma_i$ was the largest index with $S_i< d_i$. If $j$ is larger, then we get
\begin{equation*}
    \begin{split}
        T_j &= (x_{\gamma_i+1}+\ldots+x_n) - \mu(n-\gamma_i) + (x_1+\ldots + x_{j-n+\gamma_i}) - \mu(j-n+\gamma_i)
        \\&=S_n-S_{\gamma_i} + S_{j-n+\gamma_i}\geq (1-\mu)\Big(p- \frac{1+\mu}{1-\mu}q\Big)-d_i + t \\&= (1-\mu) \Big(p- \frac{1+\mu}{1-\mu}q\Big) - (d_i-t) = \Big(p- \frac{1+\mu}{1-\mu}q -i\Big)(1-\mu)\geq 0.
    \end{split}
\end{equation*}
Thus $T_j>0$ for all $j\in [n]$, and hence $\gamma_i+1$ is a head as claimed.
\end{proof}

For the rest of the proof we will assume that $n$ is large. Recall that 
$$
     p= \frac{1}{2}\left(n+2f(n)-k\right), \quad \quad  q = \frac{1}{2}\left(n-2f(n)+k\right), \quad \quad \mu = \frac{2f(n)-2f(n/2)}{n},$$
$$     A=2\left(\frac{5}{2}\sqrt{\frac{n}{2}\log\log \frac{n+3}{2}} + 90\right) \quad \mbox{and} \quad
     f\left(\frac{n}{2}\right) = \Bigg\lfloor 3\sqrt{\frac{n}{2}\log\log\left(\frac{n}{2}+3\right)}\Bigg\rfloor + 100.$$

Since $ k \leq A \leq 2f(n/2)$,  we have
$$\frac{1+\mu}{1-\mu} = \frac{n+2f(n)-2f\left(\frac{n}{2}\right)}{n-2f(n)+2f\left(\frac{n}{2}\right)}\leq \frac{n+2f(n)-2f\left(\frac{n}{2}\right)}{2q},$$ and so 
$$p-\frac{1-\mu}{1+\mu}q \geq f\left(\frac{n}{2}\right)-\frac{k}{2}\geq \frac{1}{4}\sqrt{n\log\log n}.$$
In particular, $p-\frac{1-\mu}{1+\mu}q >0$ implies that $\mu \leq \frac{p-q}{p+q}$. Thus we can apply Lemma~\ref{lem:cycle}  to conclude that there are always at least $p-\lfloor \frac{1+\mu}{1-\mu}\ q\rfloor\geq \frac{1}{4}\sqrt{n\log\log n}$ heads. This shows that for large $n$, any path from the origin to $(n,2f(n)-k)$ has at least $\frac{1}{4}\sqrt{n\log\log n}$ cyclic shifts which stay above the line $y=\mu x$.

Observe that this does not immediately imply that there are $\frac{1}{4}\sqrt{n\log\log n}$ good sequences in any equivalence class, as two heads may correspond to the same good sequence. Fix a good sequence, let  $E$ be the collection of all distinct sequences obtained from it by all cyclic shifts. Since cyclic shift is a group action and there are $n$ such shifts, this means that each sequence in our collection is obtained by shifts exactly $n/|E|$ times. Therefore there are at least 
$\frac{\frac{1}{4}\sqrt{n\log\log n}}{n/|E|}=\frac{\frac{1}{4}\sqrt{n\log\log n}}{n}|E|$ good sequences in $E$.  Since this is true for any equivalence class, the total fraction of good sequences is at least $\frac{\frac{1}{4}\sqrt{n\log\log n}}{n}$. 

Hence, we obtain
$$|Q(k)|\geq \frac{\frac{1}{4}\sqrt{n\log\log n}}{n}\binom{n}{\frac{n}{2} + f(n)-\frac{k}{2}},$$
whenever $n+k$ is even. Using that $-A\leq k\leq A$ we can write 
$$|Q(k)|\geq \frac{\sqrt{n\log\log n}}{4n}\binom{n}{\frac{n}{2} + f(n)+\frac{A}{2}}\geq \frac{1}{\sqrt{n}} \frac{2^{n}}{\sqrt{n}} e^{-2(f(n)+A/2)^2/n}\geq \frac{2^n}{n\log^{46}n}, $$ 
where for the second inequality we used the known estimate for binomial coefficients, see \cite{asymptopia}, equation~(5.41). This finishes the proof of Claim~\ref{lemma:Qk}.
\end{proof}

By  Claim \ref{lemma:Pk} and Claim \ref{lemma:Qk}, we have that for sufficiently large $n$,
\begin{equation*}
    |\F_n| = \sum_{k}|P(k)||Q(k)|\geq \left(\sum_{k=-A}^{k=A} |P(k)|\right)\cdot\min_{\substack{-A\leq k\leq A\\n+k\text{ is even}}} |Q(k)|\geq \frac{2^{2n}}{2n\log^{46} n}.
\end{equation*}
This finishes the proof of Theorem~\ref{thm:constr}.
\end{proof}

\section{Concluding remarks and open problems}\label{sec:concl}

In this note we study how certain set-theoretic extremal results behave in a finite versus infinite setting, by analyzing this problem for antichains. As already mentioned, being an antichain is a much stronger requirement than being prefix-free, yet our construction gives a similar estimate as the best construction for Kraft's inequality. We further speculate that the upper bound coming from Kraft's inequality might be essentially correct, that is, for any $\epsilon >0$ there exists an antichain $\F\subset 2^{\mathbb{N}}$ such that $$\liminf_{n\rightarrow \infty}\frac{|\F \cap 2^{[n]}|\cdot n\log^{1+\epsilon} n}{2^n}>0.$$ 

Another interesting open problem is to study infinite $H$-free graphs (for bipartite $H$) and the growth rate of the minimum degree of their restrictions to $[n]$. As we already mentioned in the introduction, the minimum degree of $C_4$-free graphs and more generally for $K_{s,t}$-free graphs was studied by Conlon--Fox--Sudakov~\cite{benny!}. They showed that if $G$ is a $K_{s,t}$-free graph on $\mathbb{N}$ where $2\leq s\leq t$, then 
$$\liminf_{n\rightarrow\infty} \frac{\delta(G_n)(\log n)^{1/s}}{n^{1-1/s}}<\infty.$$ A natural question, raised in \cite{benny!} whether it is true for any infinite $C_6$-free graph $G$ that
$$\liminf_{n\rightarrow\infty} \frac{\delta(G_n)}{n^{1/3}}=0.$$ 

Finally, it would be interesting to obtain further results in this spirit of ``finite versus infinite'' for some other problems in extremal combinatorics.

\vspace{0.3cm}
\noindent
{\bf Acknowledgment.}\, We would like to thank Vincent Tassion and Wendelin Werner for useful discussions about the Law of Iterated Logarithm, and D\"om\"ot\"or P\'alv\"olgyi for pointing out that our proof of Theorem \ref{thm:upper} uses Kraft's inequality.

\bibliography{mybib}
\bibliographystyle{abbrv}

\end{document}